\documentclass[12pt]{amsart}
\usepackage{latexsym,amsfonts,amsthm,amsmath,amscd,amssymb}
\usepackage[all]{xy}
\usepackage{array}
\usepackage[pdftex]{hyperref}
\pagestyle{plain}

\newtheorem{theorem}{Theorem}
\newtheorem{proposition}[theorem]{Proposition}
\newtheorem{lemma}[theorem]{Lemma}

\begin{document}

\title{Bernstein Type Results for Lagrangian Graphs with Partially
Harmonic Gauss Map}
\author{Wei Zhang}
\thanks{
Mathematics Classification Primary(2000): 53A10, 53A07, 53C38
\\
\indent \ Keywords: Bernstein type theorem, Hamiltonian minimal,
conformal Maslov form, Lagrangian graphs.\\
\indent \ The author thanks his advisor Professor Dong for
suggesting him study this subject and some related methods. He is
also grateful to the referee who had pointed out a mistake in the
previous verison}
 \maketitle
 \begin{abstract}
We establish Bernstein Theorems for Lagrangian graphs which are
Hamiltonian minimal or have conformal Maslov form. Some known
results of minimal (Lagrangian) submanifolds are generalized.

 \end{abstract}

\section{Introduction}
The classical Bernstein theorem asserts any global graphic minimal
surface on $\mathbb{R}^2$ must be a plane. This result has been
generalized to $\mathbb{R}^n$ for $n\leq 7$, and higher dimensions
or codimesions under various growth conditions, see \cite{EH1990},
\cite{JX1999}, \cite{Wang_M_T2003a} and their references.

Due to string theory, minimal Lagrangian submanifolds, especially
the special Lagrangian submanifolds received much attention in
recent years (\cite{Joyce_D_D2001}). The Lagrangian graph in
$\mathbb{C}^n$ has the form of $(x, \nabla u(x))$, where $u(x)$ is a
smooth function on $\mathbb{R}^n$. Some authors established
Bernstein type results for minimal Lagrangian graph under various
conditions (\cite{JX2001}, \cite{Yuan_Y2002}, \cite{Wang_M_T2003a},
\cite{DHJ2006}). On the other hand, there are two nice
generalizations of minimal Lagrangian submanifolds: Hamiltonian
minimal Lagrangian submanifolds and Lagrangian submanifolds with
conformal Maslov form. They are introduced respectively by
\cite{Oh_Y_G1990} and \cite{RU1998}. It turns out that they have
partially harmonic Gauss maps into the Lagrangian Grassmannian, and
may be regarded as generalizations of submanifolds with parallel
mean curvature vector. In this paper, we will establish Bernstein
type results for these two classes of typical Lagrangian
submanifolds.

\begin{theorem}\label{Th:GenerYuan}
Let (x,$\nabla u$) represent a Hamiltonian minimal Lagrangian
submanifold $\Sigma$ in $\mathbb{C}^n$. If u is a smooth convex
function on $\mathbb{R}^n$, i.e.
$$Hess(u)\geq 0$$
then $\Sigma$ is an affine plane.
\end{theorem}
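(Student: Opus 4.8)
The plan is to reduce the statement to the Bernstein theorem for \emph{minimal} Lagrangian graphs with convex potential (Yuan's theorem \cite{Yuan_Y2002}), by exploiting convexity to show that the Lagrangian angle must in fact be constant.

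First I would recall the geometry of the graph. If $\lambda_1,\dots,\lambda_n$ denote the eigenvalues of $Hess(u)$, then the metric induced on $\Sigma$ from $\mathbb{C}^n$ is $g=I+Hess(u)^2$, so $g\ge I$ and $(\mathbb{R}^n,g)$ is complete; the Lagrangian angle is $\theta=\sum_i\arctan\lambda_i$ and the mean curvature one-form of $\Sigma$ equals $d\theta$. By Oh's first variation formula \cite{Oh_Y_G1990}, $\Sigma$ is Hamiltonian minimal if and only if $d^{\ast}(d\theta)=0$, i.e.\ $\Delta_g\theta=0$: the Lagrangian angle is a harmonic function on $(\mathbb{R}^n,g)$. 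Since $Hess(u)\ge 0$ we have $\lambda_i\ge 0$, hence $0\le\theta<\tfrac{n\pi}{2}$, so $\theta$ is also bounded.

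The heart of the argument is then a Liouville theorem, and to put the equation in a usable form I would first apply a Lewy--Yuan rotation. Fix $\alpha\in(-\tfrac{\pi}{2},0)$ and apply the unitary transformation $e^{i\alpha}I_n$ of $\mathbb{C}^n$. Because $Hess(u)\ge 0$, the map $x\mapsto \tilde x:=\cos\alpha\,x-\sin\alpha\,\nabla u(x)$ has Jacobian $\cos\alpha\,I+|\sin\alpha|\,Hess(u)\ge\cos\alpha\,I>0$ everywhere, hence (being a proper local diffeomorphism) is a diffeomorphism of $\mathbb{R}^n$, and the image $\tilde\Sigma$ is again a Lagrangian graph $(\tilde x,\nabla\tilde u(\tilde x))$ defined over all of $\mathbb{R}^n$. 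The eigenvalues of $Hess(\tilde u)$ are $\tilde\lambda_i=\tan(\alpha+\arctan\lambda_i)\in[\tan\alpha,-\cot\alpha)$, so they are bounded, and therefore $\tilde g=I+Hess(\tilde u)^2$ is uniformly equivalent to the Euclidean metric. A unitary rotation preserves Hamiltonian minimality and shifts the Lagrangian angle by the constant $n\alpha$, so $\tilde\theta=\theta+n\alpha$ is bounded and satisfies $\Delta_{\tilde g}\tilde\theta=0$, i.e.\ $\partial_i\!\big(\sqrt{\det\tilde g}\;\tilde g^{ij}\,\partial_j\tilde\theta\big)=0$ --- a uniformly elliptic divergence-form equation on $\mathbb{R}^n$ with bounded coefficients. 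By Moser's Harnack inequality, bounded solutions of such equations are constant, so $\tilde\theta$, and hence the original angle $\theta=\tilde\theta-n\alpha$, is constant.

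Consequently $\Sigma$ is a \emph{special} Lagrangian graph of the convex function $u$, and Yuan's theorem \cite{Yuan_Y2002} forces $u$ to be a quadratic polynomial; then $\nabla u$ is affine and $\Sigma$ is an affine $n$-plane. I expect the genuinely delicate points to be the global validity of the Lewy--Yuan rotation --- properness of $x\mapsto\tilde x$ and smoothness of $\tilde u$ on all of $\mathbb{R}^n$, which follow from the uniform lower bound on the Jacobian --- and checking that Hamiltonian minimality, equivalently harmonicity of the angle, is correctly transported by the rotation; once these are in place the Liouville step is a standard application of De Giorgi--Nash--Moser theory.
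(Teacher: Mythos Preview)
Your argument is correct and follows essentially the same route as the paper: show the Lagrangian angle is a bounded harmonic function, use a Lewy--Yuan rotation (exactly as in \cite{Yuan_Y2002}) to convert the convexity hypothesis into a uniform two-sided bound on the Hessian, invoke a Liouville theorem to conclude the angle is constant, and finish with Yuan's Bernstein theorem. The only cosmetic difference is that the paper phrases the Liouville step via the Hildebrandt--Jost--Widman result (Theorem~\ref{Th:LiouvilleType}) for harmonic maps from a \emph{simple} Riemannian manifold, whereas you invoke De~Giorgi--Nash--Moser directly on the uniformly elliptic divergence-form equation; in the present setting (bounded Hessian after rotation $\Leftrightarrow$ metric uniformly equivalent to Euclidean $\Leftrightarrow$ simple) these are equivalent formulations.
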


\begin{theorem}\label{Th:GenerHJW}
$\Sigma=(x,\nabla u)$ is a Lagrangian submanifold with conformal
Maslov form. Suppose that there exists a number $\beta_0$ with

\begin{equation*}
\beta_0<cos^{-n}(\frac{\pi}{2\sqrt{\kappa n}}), \quad \kappa=
\begin{cases}
1 & n=2\\
2 & n \geq 3
\end{cases}
\end{equation*}
such that
$$\Delta_u:=\sqrt{det(I+Hess^2(u))}\leq \beta_0$$
then $\Sigma$ is a plane.
\end{theorem}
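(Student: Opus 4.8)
\emph{Proof plan.} The strategy is the Hildebrandt--Jost--Widman scheme applied to the Gauss map of $\Sigma$, in the spirit of the minimal graph case that Theorem~\ref{Th:GenerHJW} generalizes. Write $\Sigma$ as the graph $(x,\nabla u(x))$, $x\in\mathbb{R}^n$, so that $\Sigma$ is identified with $\mathbb{R}^n$ carrying the induced metric $g_{ij}=\delta_{ij}+\sum_k u_{ik}u_{kj}$, i.e. $g=I+Hess^2(u)$. Since $\Sigma$ has conformal Maslov form, its Gauss map $\gamma\colon\Sigma\to\Lambda_n=U(n)/O(n)$ is harmonic (the ``partially harmonic Gauss map'' of the introduction), with the metric on $\Lambda_n$ normalized so that $0\le\operatorname{sec}_{\Lambda_n}\le\kappa$. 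It therefore suffices to prove that $\gamma$ is constant: then all tangent $n$-planes of $\Sigma$ coincide, $Hess(u)$ is constant, $\nabla u$ is affine, and $\Sigma$ is an affine plane.

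The first step is to convert the hypothesis on $\Delta_u$ into uniform control of the domain. If $\lambda_1,\dots,\lambda_n$ are the eigenvalues of $Hess(u)$, then $\Delta_u^2=\prod_i(1+\lambda_i^2)$, so $1+\lambda_i^2\le\Delta_u^2\le\beta_0^2$ for every $i$; hence $Hess(u)$ is bounded and $I\le g\le\beta_0^2 I$. Consequently $(\mathbb{R}^n,g)$ is complete, $g$ is uniformly elliptic (comparable to the Euclidean metric), geodesic balls satisfy $B_R^g\subset B_R^{\mathrm{eucl}}$, and $\operatorname{vol}_g(B_R^g)=\int_{B_R^g}\Delta_u\,dx\le\beta_0\,\omega_n R^n$ has Euclidean growth. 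This is exactly the environment in which the Liouville theorem for harmonic maps into small geodesic balls operates.

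The second step is to locate $\gamma(\Sigma)$. At each point the tangent plane of the graph makes principal angles $\theta_i=\arctan\lambda_i$ with the reference Lagrangian plane $\gamma_0=\mathbb{R}^n\times\{0\}$, so $d_{\Lambda_n}(\gamma(\cdot),\gamma_0)^2=\sum_i\theta_i^2$. Now $\prod_i\cos\theta_i=\Delta_u^{-1}\ge\beta_0^{-1}>\cos^n\bigl(\tfrac{\pi}{2\sqrt{\kappa n}}\bigr)$, and I would establish the elementary inequality that $\prod_i\cos\theta_i\ge c$ forces $\sum_i\theta_i^2\le n\arccos^2(c^{1/n})$: the set $\{\theta\in(-\tfrac{\pi}{2},\tfrac{\pi}{2})^n:\prod_i\cos\theta_i\ge c\}$ is compact (there $\cos\theta_i\ge c$) and convex (since $-\log\cos$ is convex), so the convex function $\sum_i\theta_i^2$ attains its maximum at a boundary critical point, where the Lagrange relation $2\theta_i=\mu\tan\theta_i$ and the strict monotonicity of $t\mapsto t\cot t$ force $\theta_i=\pm\arccos(c^{1/n})$. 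With $c=\beta_0^{-1}$ this gives $\sum_i\theta_i^2\le n\arccos^2(\beta_0^{-1/n})<\tfrac{\pi^2}{4\kappa}$, so $\gamma(\Sigma)\subset\overline{B_{R_0}(\gamma_0)}$ with $R_0<\tfrac{\pi}{2\sqrt\kappa}$; by the curvature bound (and the injectivity radius of $\Lambda_n$) this is a regular, strictly geodesically convex ball, on which $\phi:=\tfrac12 d^2(\cdot,\gamma_0)$ satisfies $Hess\,\phi\ge c_0\,g$ with $c_0=\sqrt\kappa\,R_0\cot(\sqrt\kappa\,R_0)>0$.

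The final step is the Liouville argument. Since $\gamma$ is harmonic and $\phi$ is convex, $f:=\phi\circ\gamma$ is a bounded function on $(\mathbb{R}^n,g)$ with $\Delta_g f\ge c_0|d\gamma|^2\ge0$ and $|\nabla f|\le R_0|d\gamma|$. Testing $\Delta_g f\ge c_0|d\gamma|^2$ against $\psi^2$ for a standard cut-off ($\psi\equiv1$ on $B_R^g$, $\operatorname{supp}\psi\subset B_{2R}^g$, $|\nabla\psi|\le 2/R$) and using the Euclidean volume growth yields the energy estimate $\int_{B_R^g}|d\gamma|^2\le C R^{n-2}$. Combining this with the interior estimates of Hildebrandt--Jost--Widman for harmonic maps into convex balls (valid because $g$ is uniformly elliptic), for instance $|d\gamma|^2(x_0)\le C R^{-n}\int_{B_R^g(x_0)}|d\gamma|^2\le C' R^{-2}$, and letting $R\to\infty$, we obtain $d\gamma\equiv0$, hence $\gamma$ is constant and $\Sigma$ is a plane. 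I expect the real obstacle to be the second step: reconciling the elementary inequality $\prod\cos\theta_i\ge c\Rightarrow\sum\theta_i^2\le n\arccos^2(c^{1/n})$ with the precise convexity radius of the Lagrangian Grassmannian in the chosen normalization, so that the sharp threshold $\cos^{-n}(\tfrac{\pi}{2\sqrt{\kappa n}})$---and the exact value of $\kappa$---comes out correctly; the uniform-ellipticity reduction and the Liouville step are then routine given the Hildebrandt--Jost--Widman machinery recalled above.
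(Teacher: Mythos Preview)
There is a genuine gap at the very start of your argument: under the conformal Maslov form hypothesis the Gauss map $\nu:\Sigma\to\mathcal{L}(n)=U(n)/SO(n)$ is \emph{not} harmonic, and the rest of your scheme collapses without this. Recall from the preliminaries that the fibration $\det:\mathcal{L}(n)\to S^1$ is a Riemannian submersion with totally geodesic fibers isometric to $SU(n)/SO(n)$, and that the tension field of $\nu$ splits as $\tau(\nu)=\tau(\nu)^H+\tau(\nu)^V$. Hamiltonian minimality is equivalent to $\tau(\nu)^H=0$; conformal Maslov form is equivalent only to $\tau(\nu)^V=0$. Neither condition alone forces $\tau(\nu)=0$ (that would amount to parallel mean curvature by Ruh--Vilms, which is strictly stronger than CMF). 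Your Liouville step therefore cannot be applied to $\nu$ itself, and asserting that CMF gives a harmonic Gauss map into $\Lambda_n$ is exactly the step that fails.

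The missing idea, which the paper supplies, is to exploit the local product structure $(\det)^{-1}(c)\cong I\times SU(n)/SO(n)$: one builds a global projection $\pi$ from the Gauss image to a fixed fiber $F_0=SU(n)/SO(n)$ by sliding along the horizontal geodesic $\sigma$, and it is the composed map $\pi\circ\nu:\Sigma\to F_0$ that is genuinely harmonic, precisely because $\tau(\nu)^V=0$. Your second step (the bound $\Delta_u\le\beta_0\Rightarrow\sum_k\theta_k^2\le R^2$ with $R<\tfrac{\pi}{2\sqrt\kappa}$) is correct and then gives the fiber bound $\sum_k\bigl(\theta_k-\tfrac{1}{n}\sum_i\theta_i\bigr)^2\le R^2$, placing $\pi\circ\nu(\Sigma)$ in a small ball of $F_0$; the sharp constant $\kappa=1$ for $n=2$ comes from $SU(2)/SO(2)\cong S^2$, not from $\mathcal{L}(2)$, so your normalization of the target curvature is also aimed at the wrong space. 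Hildebrandt--Jost--Widman applied to $\pi\circ\nu$ yields $\pi\circ\nu\equiv\mathrm{const}$, whence $Hess(u)=(\tan\theta)\,I$ with $\theta$ a priori a function of $x$; a short additional argument (from $u_{ij}=0$ for $i\neq j$ and $u_{11}=\dots=u_{nn}$) is still required to force $\theta$ constant and conclude that $\Sigma$ is a plane. Your proposal omits both the fiber projection and this final step.
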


Theorem \ref{Th:GenerYuan} was obtained in \cite{Yuan_Y2002} under
the condition that $\Sigma$ is minimal Lagrangian and there is some
kind of generalization of Yuan's Theorem in \cite{DHJ2006}; Theorem
\ref{Th:GenerHJW} was established in \cite{HJW1980} for the
submanifolds of parallel mean curvature in Euclidean space
$\mathbb{R}^N$ and improved in \cite{JX1999} later.

As we know, there is a fiberation of lagrangian grassmannian over
$S^1$ used to introduce the Maslov form. The basic ideas of this
paper is to project the Gauss map to $S^1$ or a convex domain in the
standard fiber. The partial harmonicity implies that these projected
map is harmonic. Therefore we may use the methods in \cite{HJW1980}
to prove our results.

\section{Preliminary}

Let $\mathcal{L}(n)$ be the Lagrangian Grassmannian consisting of
all oriented Lagrangian subspaces in $\mathbb{C}^n$. It can be
identified with $U(n)/SO(n)$ in a natural way. More over it is
totally geodesic in the usual oriented Grassmann manifold
$\mathcal{G}(n,n)$ (cf. \cite{JX2001}). We will explain this in a
more invariant way.

We define an automorphism $F_{\omega}$ of $\mathcal{G}(n,n)$, which
sends any $n$-subspace $P$ to its oriented symplectic orthogonal
with respect to the canonical symplectic form $\omega$ in
$\mathbb{C}^n$, i.e. $F_{\omega}(P)=J(P^{\bot})$, where $J$ the
complex structure. A subspace $P\in \mathcal{G}(n,n)$ is Lagrangian
if and only if $F_{\omega}(P)=P$ (cf. Lemma I.2.1 of
\cite{Audin_M}), so $\mathcal{L}(n)$ is totally geodesic in
$\mathcal{G}(n,n)$.

The determinant mapping $det: U(n) \rightarrow S^1$ descends to the
quotient by $SO(n)$, so that $U(n)/SO(n)$ is a bundle over $S^1$
with projection
$$
det: U(n)/SO(n)\rightarrow S^1
$$
and standard fiber $SU(n)/SO(n)$. Following proposition describes
some important gemetric properties of this fibration, which will be
used to prove the main results.

\begin{proposition}\label{Prop:RieProGra}
The projection $det: U(n)/SO(n) \rightarrow S^1$ is a Riemannian
submersion with totally geodesic fiber, and the horizontal lift of
$S^1$ is a geodesic orthogonal to each fibers. Furthermore, for any
open arc $c=\{ e^{i\theta}: \alpha<\theta<\beta ,
0<\beta-\alpha<2\pi\} $of $S^1$,  $(det)^{-1}(c)$ is isometric to a
Riemann product
$$(\frac{\alpha}{\sqrt{n}}, \frac{\beta}{\sqrt{n}}) \times SU(n)/SO(n)$$
 with metric $d\theta^2 \times h$, where h denotes the
standard metric on SU(n)/SO(n) as a symmetric space.
\end{proposition}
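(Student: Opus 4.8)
The plan is to work entirely on the group level $U(n)$, equip it with the bi-invariant metric induced from the real part of the Hermitian inner product on $\mathfrak{u}(n)$, and then push everything down to $U(n)/SO(n)$. First I would identify the vertical distribution of $\det\colon U(n)\to S^1$ with the kernel of $d(\det) = \operatorname{tr}\circ(\text{Maurer--Cartan form})$, i.e. at the identity with $\mathfrak{su}(n)$, and the horizontal distribution with its orthogonal complement, the one-dimensional line spanned by $\tfrac{1}{\sqrt n}\, i\,\mathrm{Id}$. The normalization by $\sqrt n$ is exactly what makes $\det$ on this horizontal line an isometry onto $S^1$ with its standard metric $d\theta^2$: the curve $t\mapsto e^{it/\sqrt n}\,\mathrm{Id}$ is a unit-speed geodesic in $U(n)$ whose image under $\det$ is $e^{it\sqrt n/\sqrt n}=e^{it}$... — more precisely $\det(e^{it/\sqrt n}\mathrm{Id}) = e^{int/\sqrt n}$, and the tangent vector $\tfrac{i}{\sqrt n}\mathrm{Id}$ has norm $1$, so one reads off that $\det$ scales by $\sqrt n$ on the group, but the quotient metric on $U(n)/SO(n)$ is arranged (this is the standard convention, cf. the statement of the proposition) so that the induced submersion is an isometry; I would state the metric normalization explicitly and check the Riemannian submersion property fiber by fiber.

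Next I would verify that the fibers are totally geodesic. Since $\det$ is a homomorphism, each fiber is a coset of $SU(n)$ (downstairs: a coset of the image of $SU(n)$, i.e. a translate of $SU(n)/SO(n)$). For a bi-invariant metric on a Lie group, any connected Lie subgroup is totally geodesic — geodesics through $e$ are one-parameter subgroups $\exp(tX)$, and if $X\in\mathfrak{su}(n)$ then $\exp(tX)\in SU(n)$. Totally geodesic passes to the quotient because $SO(n)\subset SU(n)$ and the quotient map is a Riemannian submersion with totally geodesic fibers $SO(n)$; a general principle (or a direct check with O'Neill's formulas) then gives that $SU(n)/SO(n)$ is totally geodesic in $U(n)/SO(n)$. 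For the horizontal lift of $S^1$ being a geodesic orthogonal to the fibers: the horizontal line $\mathbb{R}\cdot i\,\mathrm{Id}$ at $e$ generates the central one-parameter subgroup $e^{it}\mathrm{Id}$, which is a geodesic in $U(n)$; left-translating and projecting, its image in $U(n)/SO(n)$ is a geodesic, and it is horizontal at every point because $i\,\mathrm{Id}$ is central, hence $\mathrm{Ad}$-invariant, so its horizontality is preserved under the group action. Orthogonality to the fibers is just orthogonality of $i\,\mathrm{Id}$ to $\mathfrak{su}(n)$, which is the trace pairing being zero on traceless matrices against $i\,\mathrm{Id}$.

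Finally, for the product structure over an arc $c$: $(\det)^{-1}(c)$ carries two orthogonal foliations — by the fibers (each isometric to $SU(n)/SO(n)$ with the symmetric-space metric $h$, using that the fibers are totally geodesic and mutually isometric via the geodesic flow along the horizontal lift) and by the horizontal geodesics. Because the horizontal lift is a geodesic \emph{and} the fibers are totally geodesic, the normal holonomy is trivial and the de Rham-type splitting applies: explicitly, the map $(\theta, gSO(n))\mapsto$ (flow the point $gSO(n)$ in the central fiber by time $\theta/\sqrt n - \alpha/\sqrt n$ along the horizontal geodesic) is the desired isometry onto $(\tfrac{\alpha}{\sqrt n},\tfrac{\beta}{\sqrt n})\times SU(n)/SO(n)$ with metric $d\theta^2\times h$. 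I would make this precise by writing $(\det)^{-1}(c) = \{ e^{i\psi}\,\mathrm{Id}\cdot A SO(n) : \psi\in(\tfrac{\alpha}{n},\tfrac{\beta}{n}),\ A\in SU(n)\}$ and noting this parametrization is a diffeomorphism onto the arc-preimage with the metric splitting as claimed, the $d\theta^2$ factor coming from the unit-speed reparametrization $\theta = \sqrt n\,\psi\cdot(1/\sqrt n)$... i.e. matching $\theta$-coordinate on $S^1$ to arclength $\theta/\sqrt n$ upstairs. The main obstacle I anticipate is purely bookkeeping: getting all the $\sqrt n$ normalization factors consistent between the three metrics (on $U(n)$, on $U(n)/SO(n)$, on $S^1$) so that the stated isometry with metric exactly $d\theta^2\times h$ holds on the nose rather than up to a constant scale; the conceptual content (homomorphism $\Rightarrow$ fibers are cosets of a subgroup; bi-invariant metric $\Rightarrow$ subgroups totally geodesic; central direction $\Rightarrow$ horizontal geodesic) is standard.
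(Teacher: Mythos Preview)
Your proposal is correct and rests on the same core ingredients as the paper's proof: the orthogonal splitting $\mathfrak{u}(n)=\mathfrak{su}(n)\oplus i\mathbb{R}\,\mathrm{Id}$ under the trace form, the centrality of the scalar direction $i\,\mathrm{Id}$ (which yields both the horizontal geodesic and the isometric identification of the fibers), and the $\sqrt{n}$ dilation coming from $\|i\,\mathrm{Id}\|^2=n$. The packaging differs: the paper works directly on the quotient $U(n)/SO(n)$ via explicit coset representatives---it writes the tangent vectors to $F_0$ as traceless diagonal matrices (using the $SO(n)$-action to diagonalize), exponentiates them to diagonal unitaries of determinant $1$, and obtains the product structure by left-multiplying fibers by the scalar geodesic $\sigma(t)=[e^{it/\sqrt n}\,\mathrm{Id}]$---whereas you work upstairs on $U(n)$ with its bi-invariant metric, invoke the general fact that connected Lie subgroups are totally geodesic, and then push everything down through the Riemannian submersion $U(n)\to U(n)/SO(n)$. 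Your route is slightly more conceptual and avoids choosing representatives; the paper's route is more hands-on and makes the $\sqrt n$ bookkeeping (which you correctly flag as the only delicate point) transparent from the start, since the closed geodesic $\sigma$ visibly has length $2\pi\sqrt n$ and $n$-covers the base $S^1$.
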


\begin{proof}
For convenience, from now on, always denote $O$ the point in
$\mathcal{L}(n)$ representing base plane $\mathbb{R}^n$.

First show the fiber is totally geodesic. Let
$F_\theta=det^{-1}(\theta)$ be the fiber at $e^{i\theta} \in S^1$,
where $\theta\in [0,2\pi)$. $\mathcal{L}(n)$ is symmetric, so it is
only necessary to check the fiber $F_0$. The tangent vectors to
$F_0$ at $O$ congruent to the form

\begin{equation*}
 \left(
  \begin{array}{ccc}
    \theta_1 &  &  \\
     & \ddots &  \\
     &  & \theta_n \\
  \end{array}
\right), \quad \sum \theta_k =0
\end{equation*}

The geodesics issue from these vectors are left cosets of diagonal
matrices:
$$
 \left[ \left(
  \begin{array}{ccc}
    e^{it\theta_1} &  &  \\
     & \ddots &  \\
     &  & e^{it\theta_n} \\
  \end{array}
\right)\right], \quad \sum \theta_k =0
$$
whose determinant is always 1, still contained in $F_0$. Hence $F_0$
is totally geodesic, then all the fibers are totally geodesic in
$\mathcal{L}(n)$.

We will directly show the property of local Riemannian product, and
Riemann submersion is an easy corolary.

There is a geodesic $\sigma (t)$ in the form of scalar matrix

$$
\left[ \left(
    \begin{array}{ccc}
      e^{\frac{it}{\sqrt{n}}} &  &  \\
         & \ddots &  \\
         &  & e^{\frac{it}{\sqrt{n}}} \\
    \end{array}
  \right) \right]
$$
where $t$ is the arc length parameter. This curve intersects with
it'self over period $2\pi\sqrt{n}$, i.e. a close geodesic of length
$2\pi \sqrt{n}$, topological $n$-cover of the $S^1$. Its tangent
vector at $O$ is:
$$
 \left(
  \begin{array}{ccc}
    \frac{1}{\sqrt{n}} &  &  \\
     & \ddots &  \\
     &  & \frac{1}{\sqrt{n}} \\
  \end{array}
\right)
$$

The inner product in $T \mathcal{L}(n)$ is given by $trAB^{*}$,
where $A$, $B$ are matrix represent the tangent vectors. Therefore
this curve is orthonormal to $F_0$ and by symmetric, to every fiber.

Since the scalar matrices commute with any other matrices, the
multiplication of scalar matrices descendent to $\mathcal{L}(n)$ and
is an isometric transformation between fibers. Thus when
$\beta<2\pi$, $(det)^{-1}(0, \beta)$ is isomorphic to $F_0 \cdot
\sigma(0, \frac{\beta}{\sqrt{n}}) \cong (0, \frac{\beta}{\sqrt{n}})
\times SU(n)/SO(n)$. The effect of act by $\sigma(t)$ is rotating
the planes by a given angle.

Consequently, the projection $det$ is a Riemann submersion with
constant dilation factor $\sqrt{n}$. $\sigma$ is one of the
horizontal lift of $S^1$.
\end{proof}

Now we introduce the concepts of Hamiltonian minimal Lagrangian
submanifolds and Lagrangian submanifolds with conformal Maslov form,
then state several properties close related to harmonic map.

Let $\eta$ be a normal vector field along the Lagrangian
submanifold $\Sigma$. Denote by $\alpha_\eta$ the 1-form on
$\Sigma$ defined by
$$
\alpha_\eta(X)=\omega(\eta, X), X \in T\Sigma
$$
where $\omega$ is the symplectic structure of $\mathbb{C}^n$.
Particularly, if $H$ is the mean curvature vector of $\Sigma$,
$\alpha_H$ is called the Maslov form.

The normal vector field $\eta$ is called Hamiltonian if the 1-form
$\alpha_\eta$ associated with $\eta$ is a exact form, i.e.
$\alpha_\eta=df$, where $f \in C^\infty(\Sigma)$.

According to \cite{Oh_Y_G1990}, a Lagrangian submanifold $\Sigma$ is
called Hamiltonian minimal ($H$-minimal for short) if it is a
critical point of the volume functional with respect to all
Hamiltonian variations.

\begin{lemma}\cite{Oh_Y_G1990}
The Lagrangian submanifold $\Sigma$ is H-minimal if and only if
$\delta \alpha_H=0$, i.e. the Maslov form is coclosed.

\end{lemma}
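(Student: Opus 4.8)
The plan is to combine the classical first variation formula for volume with the linear-algebraic dictionary, special to the Lagrangian setting, that turns normal vector fields into $1$-forms on $\Sigma$. Throughout I would work with compactly supported variations (or assume $\Sigma$ closed).

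\emph{Step 1: first variation.} For a compactly supported variation of $\Sigma$ with variation field $\eta\in\Gamma(N\Sigma)$, the first variation of the volume functional is
\[
\frac{d}{dt}\Big|_{0}\mathrm{Vol}(\Sigma_t)\;=\;-\int_\Sigma \langle H,\eta\rangle\, dv .
\]
Tangential components of a variation field generate diffeomorphisms of $\Sigma$ and contribute nothing, so only the normal part matters.

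\emph{Step 2: identifying Hamiltonian variations.} Given $f\in C^\infty_c(\Sigma)$, extend it to $\tilde f$ on a neighbourhood of $\Sigma$ in $\mathbb{C}^n$ and let $X_{\tilde f}$ be the associated Hamiltonian vector field, $\iota_{X_{\tilde f}}\omega=d\tilde f$. Since $\Sigma$ is Lagrangian, $\omega|_{T\Sigma}=0$, so for $Y\in T\Sigma$ one has $\omega(X_{\tilde f}^{\perp},Y)=\omega(X_{\tilde f},Y)=df(Y)$; hence the normal component $\eta:=X_{\tilde f}^{\perp}$ satisfies $\alpha_\eta=df$. Conversely, every exact form $df$ arises this way, and the Hamiltonian isotopy of $\tilde f$ is an admissible deformation of $\Sigma$. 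Thus the Hamiltonian variations are precisely those whose normal field $\eta$ has $\alpha_\eta$ exact.

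\emph{Step 3: the pointwise identity and conclusion.} For $\eta$ normal, $\alpha_\eta(X)=\omega(\eta,X)=g(J\eta,X)$, and since $\Sigma$ is Lagrangian $J\eta\in T\Sigma$; hence $\eta\mapsto J\eta$ is a fiberwise isometry $N\Sigma\to T\Sigma$ and $\eta\mapsto\alpha_\eta=(J\eta)^{\flat}$ is a fiberwise isometry $N\Sigma\to T^{*}\Sigma$. In particular $\langle H,\eta\rangle=\langle JH,J\eta\rangle=\langle\alpha_H,\alpha_\eta\rangle$ pointwise. Combining this with Steps 1 and 2, for a Hamiltonian variation with $\alpha_\eta=df$,
\[
\frac{d}{dt}\Big|_{0}\mathrm{Vol}(\Sigma_t)\;=\;-\int_\Sigma \langle\alpha_H,df\rangle\, dv\;=\;-\int_\Sigma f\,\delta\alpha_H\, dv ,
\]
using that $\delta$ is the formal $L^2$-adjoint of $d$. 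By the fundamental lemma of the calculus of variations this vanishes for all $f\in C^\infty_c(\Sigma)$ if and only if $\delta\alpha_H=0$, which is the claim.

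The only genuinely delicate point is Step 2: one must check that the family of Hamiltonian variations is rich enough that the test functions $f$ run over all of $C^\infty_c(\Sigma)$, i.e. that an arbitrary such $f$ is realized as $\alpha_\eta$ for the normal part of an honest Hamiltonian deformation of $\Sigma$ in $\mathbb{C}^n$; the extension-of-$f$ argument above settles this, and it is exactly where the ambient symplectic structure and the Lagrangian condition are used. Everything else is the standard first-variation computation together with one integration by parts.
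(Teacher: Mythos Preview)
The paper does not actually prove this lemma; it merely cites it from Oh's paper \cite{Oh_Y_G1990} and moves on. So there is no ``paper's own proof'' to compare against.

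That said, your argument is correct and is essentially the standard derivation (and is what Oh does): first variation of volume gives $-\int_\Sigma\langle H,\eta\rangle$, the Lagrangian condition turns the normal bundle into $T^*\Sigma$ via $\eta\mapsto\alpha_\eta$ isometrically, so $\langle H,\eta\rangle=\langle\alpha_H,\alpha_\eta\rangle$, and restricting to Hamiltonian variations $\alpha_\eta=df$ plus one integration by parts yields $-\int_\Sigma f\,\delta\alpha_H$. Your Step~2 is handled cleanly; the only cosmetic remark is that in this paper's convention a Hamiltonian normal field is \emph{defined} by $\alpha_\eta=df$, so strictly speaking you don't even need the ambient extension argument to characterize the admissible $\eta$---though you do need it (or an equivalent) to know such $\eta$ arise from genuine Hamiltonian isotopies of $\mathbb{C}^n$, which is the point you correctly flag.
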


There is a holomorphic volume form $dz=dz_1\wedge dz_2\cdots \wedge
dz_n$ in $\mathbb{C}^n$, evaluating $dz$ on $\Sigma$'s tangent space
has scalar value $\gamma$. $\psi$ is called the Lagrangian angle of
oriented Lagrangian submanifold $\Sigma$ if $\gamma= e^{i\psi}$(cf.
\cite{Wolfson_J1997}). $\psi$ is a well defined function takes value
in $2\pi \mathbb{R}/\mathbb{Z}$.

Wolfson had proved that $d\psi=\alpha_H$(Theorem 1.2 in
\cite{Wolfson_J1997}). By Lemma 4, $\Sigma$ is $H$-minimal if and
only if $\psi$ is a harmonic real value function on it. Because the
exponential map is a totally geodesic map from $\mathbb{R}$ to
$S^1$, by the composition law of harmonic map, $\gamma$ is a
harmonic map to $S^1$.

The Gauss map $\nu$ takes values in $\mathcal{L}(n)\cong
U(n)/SO(n)$. Obviously, $\gamma= det\circ \nu$, thus there is a
commute diagram

$$\xymatrix{
                &         \mathcal{L}(n) \ar[d]^{det}     \\
  \Sigma \ar[ur]^{\nu} \ar[r]_{\gamma} & S^1             }$$

By Proposition ~\ref{Prop:RieProGra}, $\gamma :\Sigma\rightarrow
S^1$ is harmonic if and only if $\tau(\nu)^H\equiv 0$, where
$\tau(\nu)^H$ is the horizontal component of the tension field
$\tau(\nu)$ with respect to the projection $det$. Thus $\Sigma$ is
$H$-minimal if and only if its Gauss map $\nu$ is horizontally
harmonic.

We had introduced the Maslov form $\alpha_H$. According to
\cite{RU1998}, a Lagrangian submanifold $\Sigma$ in $\mathbb{C}^n$
is said to have conformal Maslov form (CMF briefly), if $JH$ is a
conformal vector fields on $\Sigma$. The Gauss map of such manifolds
is vertically harmonic, i.e. $\tau(\nu)^V\equiv 0$, where
$\tau(\nu)^V$ is the vertical component of the tension field
$\tau(\nu)$ with respect to the projection $det$.

\section{Proof of the main theorems}
Simple Riemannian manifold, is quasi-isomorphic to Euclidean space,
where certain Liuville type theorem holds on. For instance,

\begin{theorem}[\cite{HJW1980}]\label{Th:LiouvilleType}
Let $\nu : \Sigma \mapsto M$  be a harmonic map, $\Sigma$ a simple
Riemannian manifold and M is Riemannian manifold whose sectional
curvature is bounded from above by a constant $\kappa \geq 0$.
Denote $B_R(q)$ a geodesic ball of radius
$R<\frac{\pi}{2\sqrt{\kappa}}$ which does not meet the cut locus of
$q$. If the range $\nu(\Sigma)$ is contained in $B_R(q)$, then $\nu$
is a constant map.
\end{theorem}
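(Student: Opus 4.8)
The plan is to run the standard Bochner/subharmonicity argument for harmonic maps into a target of bounded curvature, comparing with the analogous quantities on the model space of constant curvature $\kappa$. First I would compose the harmonic map $\nu$ with a suitable convex function on the ball $B_R(q)$: since $R < \frac{\pi}{2\sqrt\kappa}$ and $B_R(q)$ avoids the cut locus of $q$, the function $\phi(\cdot) = \frac{1}{\kappa}\bigl(1-\cos(\sqrt\kappa\, \mathrm{dist}(q,\cdot))\bigr)$ (or $\frac{1}{2}\mathrm{dist}(q,\cdot)^2$ when $\kappa=0$) is smooth, nonnegative, and has Hessian bounded below by a positive multiple of the metric on $B_R(q)$, by the Hessian comparison theorem together with the upper curvature bound $\kappa$. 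Composing with the harmonic map gives, via the chain rule for the tension field, a function $\psi = \phi\circ\nu$ on $\Sigma$ satisfying $\Delta\psi = \langle \mathrm{Hess}\,\phi(d\nu,d\nu)\rangle \geq c\,|d\nu|^2 \geq 0$, so $\psi$ is subharmonic and bounded (since $\phi$ is bounded on $B_R(q)$).

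Next I would invoke the definition of a simple Riemannian manifold: $\Sigma$ is quasi-isometric to Euclidean space, hence it carries no nonconstant bounded subharmonic functions (a Liouville property, which is exactly the feature that makes "simple" the right hypothesis here). Applying this to $\psi$ forces $\psi$ to be constant, whence $\Delta\psi \equiv 0$, and then the inequality $c\,|d\nu|^2 \leq \Delta\psi = 0$ forces $d\nu \equiv 0$, i.e. $\nu$ is constant. One small subtlety to address: the constant $c$ in $\mathrm{Hess}\,\phi \geq c\,g$ degenerates as the radius approaches $\frac{\pi}{2\sqrt\kappa}$, but since $\nu(\Sigma)$ lies in the \emph{closed} ball of some radius $R' < R < \frac{\pi}{2\sqrt\kappa}$ — or one shrinks slightly — $c$ can be taken strictly positive on the relevant region, so this causes no difficulty.

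The main obstacle, and the only place real care is needed, is the Hessian comparison step: one must verify that the composite $\phi\circ\nu$ is genuinely subharmonic, which requires that along $\nu$ the target Hessian of $\phi$ controls the energy density from below. This uses that the sectional curvature of $M$ is $\leq \kappa$ (not merely that $M$ has some curvature bound) so that distance functions in $M$ are \emph{more} convex than in the $\kappa$-model, and that $B_R(q)$ misses the cut locus so that $\mathrm{dist}(q,\cdot)$ is smooth there. Once that is in hand, the argument is purely formal. I would remark that this is essentially the Hildebrandt–Jost–Widman argument \cite{HJW1980}; I reproduce it here only to fix notation and because it is the engine behind both main theorems, applied once to the $S^1$-factor (for Theorem \ref{Th:GenerYuan}) and once to the $SU(n)/SO(n)$-fiber (for Theorem \ref{Th:GenerHJW}).
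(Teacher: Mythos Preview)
The paper does not prove Theorem~\ref{Th:LiouvilleType}; it is quoted from \cite{HJW1980} and used as a black box, so there is nothing in the paper to compare your argument against.

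On its own, your argument has a genuine gap: the claim that a simple Riemannian manifold ``carries no nonconstant bounded subharmonic functions'' is false in dimension $n\geq 3$. Already on flat $\mathbb{R}^3$ the smooth function $v(x)=-(1+|x|^2)^{-1/2}$ is bounded, nonconstant, and satisfies $\Delta v=3(1+|x|^2)^{-5/2}>0$. More conceptually, $\mathbb{R}^n$ is non-parabolic for $n\geq 3$, and non-parabolic manifolds always admit nonconstant bounded subharmonic functions; quasi-isometry to Euclidean space does not change this. Hence from $\Delta\psi\geq c\,|d\nu|^2$ and $\psi$ bounded you cannot directly conclude that $\psi$ is constant. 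The actual argument in \cite{HJW1980} does begin with the same convex function $\phi$ on $B_R(q)$ and the same chain-rule inequality, but then combines this with a Bochner-type inequality for the energy density and runs a Moser iteration to obtain an interior a priori estimate of the form $|d\nu|(x_0)\leq C(n,\kappa,R)/r$ for harmonic maps from a Euclidean ball $B_r$ (carrying a uniformly elliptic metric) into $B_R(q)$; the Liouville statement on a simple $\Sigma$ then follows by letting $r\to\infty$. You would need that a priori estimate, not merely the subharmonicity of $\psi$, to close the proof.
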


Apply this theorem to the Gauss map of a submanifold with parallel
mean curvature, \cite{HJW1980} got Bernstein type theorem.

Reader can find the precise definition of simple manifold in
\cite{HJW1980}. In the context of lagrangian graph, uniformly
bounded $Hess(u)$ assures the simpleness.

At the same time, so far as we know, the Lagrangian fiberation has
many local nice properties. The Gauss image $X$ of an oriented
Lagrangian graph $\Sigma$ lies in a special region of
$\mathcal{L}(n)$, which makes it possible to extend this local
properties to global. To do this ,we need a natural representation
of the $X$ in $\mathcal{L}(n)$, i.e. picking up an element in every
coset of $U(n)/SO(n)$ canonically, to represent the given tangent
plane.

\begin{lemma}
If $P$ is a tangent plane of $\Sigma=(x,\nabla u)$ in
$\mathbb{C}^n$, denote $\lambda_k$ the eigenvalues of $Hess(u)$,
then in its corresponding coset, there exists $V=S
diag(e^{i\theta_1}, \cdots, e^{i\theta_n}) S^{-1}$, where $S \in
SO(n)$, and $\theta_k=\arctan \lambda_k$ are the critical angles
between $P$ and $\mathbb{R}^n$.
\end{lemma}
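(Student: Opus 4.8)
The plan is to produce the desired representative $V$ in the $SO(n)$-coset corresponding to the tangent plane $P$ by a direct computation with the graph parametrization, diagonalizing the Hessian to reduce to the one-dimensional picture. First I would write the graph as $x\mapsto (x,\nabla u(x))$ and compute the coordinate tangent vectors $\partial_j = e_j + \sum_k u_{jk}\,Je_k$, where $u_{jk}=\mathrm{Hess}(u)_{jk}$ and $\{e_1,\dots,e_n\}$ is the standard basis of $\mathbb{R}^n\subset\mathbb{C}^n$, $J$ the complex structure. Since $\mathrm{Hess}(u)$ is symmetric, pick $S\in SO(n)$ with $S^{-1}\,\mathrm{Hess}(u)\,S=\mathrm{diag}(\lambda_1,\dots,\lambda_n)$; after the orthogonal change of basis $\tilde e_k = S e_k$ on $\mathbb{R}^n$ (which extends $\mathbb{C}$-linearly to a unitary rotation commuting with $J$), the tangent plane $P$ is spanned by the orthogonal vectors $\tilde e_k + \lambda_k J\tilde e_k$, $k=1,\dots,n$.

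Next I would exhibit the unitary matrix realizing $P$ as a point of $\mathcal{L}(n)=U(n)/SO(n)$. A Lagrangian plane that in the rotated frame is spanned by $\tilde e_k+\lambda_k J\tilde e_k$ is the image of $\mathbb{R}^n$ under the unitary map that sends $\tilde e_k$ to the unit vector $\frac{1}{\sqrt{1+\lambda_k^2}}(\tilde e_k+\lambda_k J\tilde e_k)=e^{i\theta_k}\tilde e_k$ with $\theta_k=\arctan\lambda_k$; in other words, in the frame $\{\tilde e_k\}$ the plane is represented by $\mathrm{diag}(e^{i\theta_1},\dots,e^{i\theta_n})$. Conjugating back to the standard frame by $S$ gives the representative $V = S\,\mathrm{diag}(e^{i\theta_1},\dots,e^{i\theta_n})\,S^{-1}\in U(n)$, and $V\cdot SO(n)$ is exactly the coset of $P$ because two unitaries represent the same Lagrangian plane iff they differ by right multiplication by an element of $SO(n)$, and a different choice of diagonalizing $S$ (or of orthonormal eigenframes within eigenspaces) changes $V$ only on the right by $SO(n)$. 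Finally I would recall the standard fact (e.g. from the Grassmannian/Jordan-angle picture, cf. \cite{JX2001}) that the critical (Jordan) angles between $P$ and $\mathbb{R}^n$ are precisely $\arctan$ of the singular values of the "slope" $\mathrm{Hess}(u)$, which here, $\mathrm{Hess}(u)$ being symmetric, are $|\lambda_k|$, so $\theta_k=\arctan\lambda_k$ are those angles up to sign/orientation bookkeeping.

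The main obstacle I anticipate is purely bookkeeping rather than conceptual: matching conventions so that the representative $V$ is genuinely well-defined \emph{as a coset} and the angles $\theta_k=\arctan\lambda_k$ come out with the correct signs and within the branch $(-\pi/2,\pi/2)$, which is what later lets the Gauss image sit inside the region of $\mathcal{L}(n)$ where Proposition \ref{Prop:RieProGra} applies. One must check that reordering eigenvalues, permuting within a repeated eigenspace, or replacing $S$ by $S Q$ with $Q\in SO(n)$ block-diagonal on eigenspaces all act on $V$ by right $SO(n)$-multiplication, and that the orientation on $P$ inherited from the graph parametrization is consistent with the chosen orientation of $\mathbb{R}^n$ (so that $S\in SO(n)$ rather than $O(n)$ suffices — if $\det S=-1$ one flips the sign of one $\tilde e_k$, which flips the sign of the corresponding $\lambda_k$ and hence of $\theta_k$, and this is absorbed into the choice of oriented eigenframe). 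Once these conventions are pinned down, the identification of $\theta_k$ with the critical angles is immediate from the definition of critical angle as $\arccos$ of the singular values of the orthogonal projection $P\to\mathbb{R}^n$, which for the vector $e^{i\theta_k}\tilde e_k$ is exactly $\cos\theta_k=1/\sqrt{1+\lambda_k^2}$.
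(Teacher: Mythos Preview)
Your proposal is correct and follows essentially the same approach as the paper: diagonalize $\mathrm{Hess}(u)$ to obtain an orthonormal eigenframe, observe that in this frame the tangent plane is spanned by $e^{i\theta_k}\tilde e_k$ with $\theta_k=\arctan\lambda_k$, and conjugate back by the diagonalizing rotation $S\in SO(n)$ to get the representative $V$. The paper's argument is slightly terser on the coset well-definedness and orientation points you flag, but the substance and the citation of the Jordan-angle identification are the same.
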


\begin{proof}

This kind of $P$ in $\mathbb{C}^n \backsimeq \mathbb{R}^{2n}$,
contains no vectors orthogonal to $\mathbb{R}^n$, so we can define a
linear map form $\mathbb{R}^n$ to its compliment $J\mathbb{R}^n$:
$F_P(a)=b$, where $a\in \mathbb{R}^n$, $b\in J\mathbb{R}^n$, iff
there is a vector $v$ in $P$, s.t. ${\pi}_1 v=a$, ${\pi}_2 v=b$.
$F_P$ is $Hess(u)$ essentially. By eigenvalue decomposition, there
is a orthonormal basis $s_k$ of $\mathbb{R}^n$, s.t.
$F_P(s_k)=\lambda_k J(s_k)$. According to \cite{Wong_Y_C1967},
\cite{JX1999}, $\theta_k=\arctan\lambda_k$ are the critical angles.
Then $e_k=e^{i{\theta}_k} s_k$ are orthonomal basis of $P$. Let $S$
be the transformation sends $s_k$ to the standard basis of
$\mathbb{R}^n$. $\{s_k\}$ and the standard basis of $\mathbb{R}^n$
can be also viewed as complex basis of $\mathbb{C}^n$, while $S$ a
transform of complex space. Thus the matrix represent $P$ in the
form of:
$$
S \left(
\begin{array}{ccc}
    e^{i\theta_1} &  &   \\
       & \ddots &   \\
       &  & e^{i\theta_n} \\
  \end{array}
  \right)  S^{-1}
  $$
Since the graph is global oriented, we can arrange $s_k$ making
corresponding $e_k$ give the due orientation, so $S$ belongs to
$SO(n)$ rather than $O(n)$.

\end{proof}

Given any tangent plane $P$, corresponds to a unique set of
$\{\theta_k\}$ where $-\frac{\pi}{2}<\theta_k<\frac{\pi}{2}$. Write
$P_{\theta_1,\cdots,\theta_n}$ in stead of $P$.

 \subsection{Prove Theorem 1} \

This representation is unique up to permutations of $\theta_k$.
Define $\tilde{\gamma}=\sum\theta_k$, it is a well defined function
taking value in $\mathbb{R}$. The following diagram commutes:

$$
 \xymatrix{
                &         R \ar[d]^{e^{it}}     \\
  \Sigma \ar[ur]^{\tilde{\gamma}} \ar[r]_{\gamma} & S^1             }
$$
i.e. $\tilde{\gamma}$ is a representation of the Lagrangian angle
$\psi$ and $\tilde{\gamma}$ is harmonic iff $\gamma$ is.

From theorem \ref{Th:LiouvilleType}, any bounded harmonic function
on simple manifold must be constant. If $\Sigma$ is simple,
$\tilde{\gamma}=const$ is easily concluded form $\sum\theta_k\in
(-\frac{n\pi}{2}, \frac{n\pi}{2})$. Thus $\gamma=const$ either.
Recall Proposition 2.17 in \cite{HL1982}, which asserts that a
connected submanifold $\Sigma \in \mathbb{R}^{2n}=\mathbb{C}^n$ is
both Lagrangian and minimal if and only if $\Sigma$ is special
Lagrangian with respect to one of the calibration
$\mathrm{Re}\{e^{i\theta}dz\}$. If $\gamma$ is const,
$\mathrm{Re}\{e^{i\gamma}dz\}$ is the required calibration and
$\Sigma$ the SL submanifold. Therefore:

\begin{proposition}\label{Prop:GenerChern}

 $\Sigma=(x,\nabla u)$ is Hamiltonian minimal.
 If $Hess(u)$ is uniformly bounded, then $\Sigma$ is minimal in the usual sense.

\end{proposition}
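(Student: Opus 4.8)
\subsection*{A proof proposal for Proposition \ref{Prop:GenerChern}}

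The plan is simply to assemble the facts gathered in the Preliminary and in this section. Since $\Sigma=(x,\nabla u)$ is Hamiltonian minimal, the lemma of \cite{Oh_Y_G1990} together with Wolfson's identity $d\psi=\alpha_H$ tells us the Lagrangian angle $\psi$ is harmonic into $S^1$, hence the real lift $\tilde\gamma=\sum_k\theta_k=\sum_k\arctan\lambda_k$ is a harmonic real-valued function on $\Sigma$ (the commuting triangle with $e^{it}\colon\mathbb{R}\to S^1$ and the composition law for harmonic maps, the exponential map being totally geodesic, make this precise, once one has checked that the local representation $P_{\theta_1,\dots,\theta_n}$ of the Gauss image patches to a globally single-valued $\tilde\gamma$). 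First I would invoke the remark made above that uniform boundedness of $Hess(u)$ forces $\Sigma$ to be a \emph{simple} Riemannian manifold in the sense of \cite{HJW1980}. Granting this, $\tilde\gamma$ is a \emph{bounded} harmonic function on a simple manifold, because $\tilde\gamma$ takes values in the open interval $(-\frac{n\pi}{2},\frac{n\pi}{2})$. Applying Theorem \ref{Th:LiouvilleType} with target $M=\mathbb{R}$ (so $\kappa=0$ and the ball condition is vacuous), i.e. the Liouville property for bounded harmonic functions on simple manifolds, yields $\tilde\gamma\equiv c$ for a constant $c$, and therefore $\gamma=e^{ic}$ is constant on $\Sigma$.

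It remains to pass from $\gamma\equiv\mathrm{const}$ to minimality. I would finish by either of two equivalent routes. One may quote Proposition 2.17 of \cite{HL1982}: a connected Lagrangian $\Sigma\subset\mathbb{C}^n$ is minimal exactly when it is calibrated by some $\mathrm{Re}\{e^{i\theta}dz\}$; since $dz$ restricted to $T\Sigma$ has constant argument $c$, the form $\mathrm{Re}\{e^{-ic}dz\}$ calibrates $\Sigma$, so $\Sigma$ is special Lagrangian, in particular minimal. Alternatively, and more directly, Wolfson's identity $d\psi=\alpha_H$ gives $\alpha_H\equiv 0$; since $\Sigma$ is Lagrangian, $\alpha_H(X)=\omega(H,X)$ vanishes for all $X\in T\Sigma$ precisely when $JH$ is orthogonal to $T\Sigma$, i.e. $H\in J(N\Sigma)=T\Sigma$, and as $H$ is also normal this forces $H\equiv 0$. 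Either way $\Sigma$ is minimal in the usual sense.

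The step I expect to be the genuine obstacle is the claim that uniformly bounded $Hess(u)$ makes $\Sigma$ simple: this requires unwinding the definition of a simple manifold from \cite{HJW1980} and verifying that the induced metric $g_{ij}=\delta_{ij}+\sum_k u_{ik}u_{jk}=(I+Hess^2(u))_{ij}$, whose eigenvalues are $1+\lambda_k^2$, is two-sidedly comparable to the Euclidean metric (bounded above by the Hessian bound, bounded below by $1$), and that the distance/diameter hypotheses entering the definition of simpleness follow from this quasi-isometry. Everything else is bookkeeping already done in the preceding paragraphs.
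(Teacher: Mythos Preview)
Your proposal is correct and follows the same line as the paper: lift the Lagrangian angle to the bounded real function $\tilde\gamma=\sum_k\theta_k$, use simplicity of $\Sigma$ (granted by the uniform Hessian bound) together with the Liouville theorem of \cite{HJW1980} to force $\tilde\gamma$ constant, and then invoke Proposition~2.17 of \cite{HL1982} to conclude that $\Sigma$ is special Lagrangian, hence minimal. Your alternative ending via $d\psi=\alpha_H\equiv 0\Rightarrow H\equiv 0$ is a valid shortcut the paper does not spell out, but otherwise the two arguments coincide.
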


Chern(\cite{Chern_S_S1965}) proved that any graphic hypersurface in
$\mathbb{R}^{n+1}$ with parallel mean curvature must be minimal.
proposition~\ref{Prop:GenerChern} generalizes Chern's theorem to
Lagrangian case. Noticing the the fact after a certain kind of
rotation(see \cite{Yuan_Y2002}), $Hess(u)\geq 0$ force $\Sigma$ to
be simple and Theorem 1.1 in \cite{Yuan_Y2002}
\begin{theorem}[\cite{Yuan_Y2002}]
Suppose $\Sigma=(x,\nabla u)$ is a minimal Lagrangian submanifold of
$\mathbb{C}^n$ and u is a smooth convex function on $\mathbb{R}^n$,
then $\Sigma$ is an affine plane.
\end{theorem}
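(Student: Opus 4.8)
The plan is to recover the quoted theorem along Yuan's lines and then to use it as the last step in the proof of Theorem~\ref{Th:GenerYuan}; the paper quotes it rather than reproving it, for the reason noted at the end. For the quoted statement itself I would argue as Yuan does. Minimality of $\Sigma=(x,\nabla u)$ means its Lagrangian angle is constant, i.e. $u$ solves the special Lagrangian equation $\sum_k\arctan\lambda_k=\Theta$ for some constant $\Theta$, the $\lambda_k$ being the eigenvalues of $Hess(u)$. First I would perform the Lewy--Yuan rotation $z\mapsto e^{-i\pi/4}z$ of $\mathbb{C}^n$: it is unitary, hence an isometry preserving $\omega$ and minimality, and it shifts each critical angle $\theta_k=\arctan\lambda_k$ to $\theta_k-\pi/4$. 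As $Hess(u)\ge0$ forces $\theta_k\in[0,\pi/2)$, the rotated angles lie in $[-\pi/4,\pi/4)$, so the rotated submanifold is again an entire gradient graph whose Hessian has eigenvalues in $[-1,1)$; in particular its induced metric $g=I+Hess^2(u)$ satisfies $I\le g\le 2I$, so the minimal submanifold is quasi-isometric to $\mathbb{R}^n$. Next I would establish the Jacobi-type inequality for the logarithmic volume density $b=\frac{1}{2}\ln\det(I+Hess^2(u))$ on the minimal submanifold, of the shape $\Delta_\Sigma b\ge c\,|\nabla_\Sigma b|^2$ with $c=c(n)>0$, whose sign is governed precisely by convexity (equivalently $\theta_k\ge-\pi/4$ after the rotation). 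Then $e^{cb}$ is a bounded subharmonic function on $\Sigma$, and a mean-value / logarithmic-cutoff estimate for subharmonic functions on the quasi-Euclidean minimal submanifold forces $b$, hence $Hess(u)$, to be constant, so $\Sigma$ is an affine plane.

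To finish Theorem~\ref{Th:GenerYuan} I would then argue as follows. Since $\Sigma$ is $H$-minimal, the discussion preceding Proposition~\ref{Prop:GenerChern} already gives that $\tilde\gamma=\sum_k\theta_k$ is harmonic on $\Sigma$, and $Hess(u)\ge0$ makes it bounded, $\tilde\gamma\in[0,n\pi/2)$; the only hypothesis of Theorem~\ref{Th:LiouvilleType} still missing is simpleness. Applying the same Lewy--Yuan rotation, which preserves $\omega$ and hence the Maslov form and $H$-minimality, produces an $H$-minimal entire graph $\bar\Sigma$ with uniformly bounded Hessian, hence simple; by Proposition~\ref{Prop:GenerChern}, $\bar\Sigma$ is minimal, and rotating back shows $\Sigma$ is minimal Lagrangian. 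Since $u$ is convex, the quoted theorem then yields that $\Sigma$ is an affine plane.

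The hard part is the Jacobi inequality for $b$ together with the integral estimate converting it into rigidity: this lies outside the harmonic-map Liouville scheme used in the rest of the paper. Indeed the Gauss image of a convex entire graph --- even after the Lewy--Yuan rotation, where the angles only contract to $[-\pi/4,\pi/4)$ --- is not contained in a geodesic ball of radius $<\pi/(2\sqrt\kappa)$ in $\mathcal{L}(n)$: the scalar planes with all angles $-\pi/4$ and with all angles $\pi/4$ already lie distance $\frac{\pi}{2}\sqrt{n}$ apart along the horizontal lift $\sigma$ of Proposition~\ref{Prop:RieProGra}. Hence Theorem~\ref{Th:LiouvilleType} cannot be applied to the full Gauss map and the quoted theorem must be invoked; the new content provided here is precisely the reduction of the $H$-minimal case to it via the partial harmonicity of the Gauss map.
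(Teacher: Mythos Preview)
Your reading is correct: the paper does not prove this statement at all; it is simply quoted from \cite{Yuan_Y2002} and used, together with the Lewy--Yuan rotation, to pass from Proposition~\ref{Prop:GenerChern} to Theorem~\ref{Th:GenerYuan}. Your outline of Yuan's argument --- rotate by $e^{-i\pi/4}$ so that the critical angles land in $[-\pi/4,\pi/4)$ and the Hessian becomes uniformly bounded, derive a Jacobi-type differential inequality for the logarithmic volume density on the minimal graph, and close with a Moser/mean-value Liouville argument on the quasi-Euclidean graph --- is faithful to the original. Your route from $H$-minimality to Theorem~\ref{Th:GenerYuan} (rotate to obtain simpleness, apply Proposition~\ref{Prop:GenerChern} to the rotated graph to get minimality, rotate back, then invoke Yuan) is exactly what the paper does in the sentence immediately preceding and following the quoted theorem. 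Your final paragraph, explaining \emph{why} Theorem~\ref{Th:LiouvilleType} cannot absorb Yuan's result --- the horizontal extent of the Gauss image along $\sigma$ is $\frac{\pi}{2}\sqrt{n}$, far exceeding the allowed radius --- is a correct and clarifying observation that the paper leaves implicit; it pinpoints precisely why the PDE argument of \cite{Yuan_Y2002} is genuinely needed and not replaceable by the harmonic-map scheme used elsewhere in the paper.
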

theorem~\ref{Th:GenerYuan} follows.

  \subsection{Prove Theorem 2} \

For the representation is unique, $X \cap F_\theta$ is divided into
disjoint components $X_{l,\theta}=\{P_{\theta_1,\cdots,\theta_n}\in
X|\sum_k \theta_k=2l\pi+\theta, l\in \mathbb{Z},
-[\frac{n}{4}]-1\leq l \leq [\frac{n}{4}]\}$.


We can define a global projection $\pi:X \rightarrow F_0$ by $\pi
(P_{\theta_1,\cdots,\theta_n})=P_{\theta_1,\cdots,\theta_n} \cdot
\sigma(-\frac{\sum \theta_k}{\sqrt{n}})$. Intuitionally, pull back
all the components in all the fibers to the neighborhood of
$X_{0,0}$.

Thus the Gauss image $X$ looks like a tube around the close geodesic
$\sigma$ and has a global Riemann product structure.

\begin{proposition}\label{Prop:TubeConst}
$\Sigma$ is a simple Lagrangian graph with conformal Maslov form.
Denote $X$ the image of it's Gauss map and $T_R$ the region
$\{\sum_k(\theta_k-\frac{\sum_i \theta_i}{n})^2\leq R^2,
R<\frac{\pi}{2\sqrt{2}}\}$. If $X \subset T_R$, then $\Sigma$ is a
plane.

\end{proposition}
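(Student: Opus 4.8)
The plan is to run the scheme announced in the introduction (the one already used for Theorem~\ref{Th:GenerYuan}): project the Gauss map into the standard fibre, verify the projected map is harmonic, pin its image inside a small geodesic ball, apply the Liouville theorem, and then read off the rigidity. Since $\Sigma$ has conformal Maslov form, its Gauss map $\nu\colon\Sigma\to\mathcal L(n)$ is vertically harmonic, $\tau(\nu)^V\equiv 0$. I would compose with the projection $\pi\colon X\to F_0$ defined above to get $\varphi:=\pi\circ\nu\colon\Sigma\to F_0\cong SU(n)/SO(n)$, and claim $\varphi$ is harmonic. This is a local statement: over an open arc $c\subset S^1$ of length $<2\pi$, Proposition~\ref{Prop:RieProGra} identifies $(\det)^{-1}(c)$ isometrically with a Riemannian product in which the horizontal direction is the interval factor and $\pi$ is precisely projection onto the $SU(n)/SO(n)$ factor; hence on the corresponding piece of $\Sigma$ the tension field of $\varphi$ equals the $F_0$--component of $\tau(\nu)$, which is $\tau(\nu)^V\equiv 0$. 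Since $\tilde\gamma=\sum_k\theta_k$ is a globally defined $\mathbb R$--valued lift of the Lagrangian angle, these local trivializations are $\sigma$--translates of one another and patch consistently with the global $\pi$, so harmonicity propagates and $\varphi$ is harmonic on all of $\Sigma$.

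Next I would localize the image. Writing $\bar\theta=\frac1n\sum_i\theta_i$, one has $\pi(P_{\theta_1,\dots,\theta_n})=P_{\theta_1-\bar\theta,\dots,\theta_n-\bar\theta}$, and $t\mapsto[\mathrm{diag}(e^{\,it(\theta_k-\bar\theta)})]$, $t\in[0,1]$, is a geodesic segment of $F_0$ from $O$ (the plane $\mathbb R^n$) to this point, of length $\big(\sum_k(\theta_k-\bar\theta)^2\big)^{1/2}$. Hence $X\subset T_R$ yields $\varphi(\Sigma)=\pi(X)\subset\overline{B_R(O)}$ with $R<\tfrac{\pi}{2\sqrt2}\le\tfrac{\pi}{2\sqrt\kappa}$. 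Since $SU(n)/SO(n)$ has sectional curvature in $[0,\kappa]$ and, by the facts about compact symmetric spaces used in \cite{HJW1980} and \cite{JX1999}, such a ball avoids the cut locus of $O$, Theorem~\ref{Th:LiouvilleType} forces $\varphi\equiv Q$ for a fixed $Q\in F_0$.

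It remains to extract the rigidity. From $\varphi\equiv Q$ and the definition of $\pi$ one gets $\nu(x)=Q\cdot\sigma(\psi(x)/\sqrt n)$, so the Gauss image lies on the single closed geodesic through $Q$ in the $\sigma$--direction; computing the associated Hessian via the Cayley transform shows that, after an orthogonal change of coordinates in $\mathbb R^n$, $Hess(u)(x)=\mathrm{diag}\big(\tan(\mu(x)+\phi^0_1),\dots,\tan(\mu(x)+\phi^0_n)\big)$ with $\mu=\psi/n$ and fixed constants $\phi^0_k$. The off--diagonal entries vanish, so $u(x)=\sum_k u_k(x_k)$; then $u_k''(x_k)=\tan(\mu(x)+\phi^0_k)$ forces $\mu$ to depend on $x_k$ alone for each $k$, hence (as $n\ge2$) $\mu$ and therefore $Hess(u)$ are constant, and $\Sigma$ is an affine plane.

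The main obstacle I anticipate is the harmonicity of $\varphi=\pi\circ\nu$: vertical harmonicity of the Gauss map is phrased with respect to the submersion $\det$, and one must check carefully that it really produces an honestly harmonic map into the fibre, bearing in mind that $X$ may wind several times around $S^1$ (indeed $\sum\theta_k$ ranges over $(-\tfrac{n\pi}{2},\tfrac{n\pi}{2})$), so that no single trivializing arc contains $\nu(\Sigma)$; the $\sigma$--equivariance of the product structures in Proposition~\ref{Prop:RieProGra} is what makes the patching work. A lesser point, which I would settle by citation, is that geodesic balls of radius $<\tfrac{\pi}{2\sqrt\kappa}$ in $SU(n)/SO(n)$ do not meet the cut locus.
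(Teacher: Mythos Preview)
Your proposal is correct and follows essentially the same route as the paper: project the Gauss map along $\pi$ to the fibre $F_0$, use vertical harmonicity together with the local product structure of Proposition~\ref{Prop:RieProGra} to see that $\pi\circ\nu$ is harmonic, confine its image in a geodesic ball of radius $R<\pi/(2\sqrt2)$, apply Theorem~\ref{Th:LiouvilleType}, and then read off that $Hess(u)$ is constant from the resulting diagonal form. The only differences are cosmetic: the paper invokes the convexity of $B_R$ via the totally geodesic embedding $F_0\subset\mathcal G(n,n)$ rather than the intrinsic curvature bound on $SU(n)/SO(n)$, and it disposes of the final rigidity step by assuming without loss of generality that the constant image point is $O$ (so $Hess(u)=(\tan\theta)\,Id$), whereas you carry the general $Q$ and the constants $\phi^0_k$ through the separation-of-variables argument; both lead to the same conclusion.
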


\begin{proof}

Project $X$ to $F_0$ via $\pi$, denote $D=\pi(X)$, there is

$$\Sigma \xrightarrow{\nu} X \xrightarrow{\pi} D \subset F_0$$

If $\nu$ is vertical harmonic, $\pi\circ \nu$ is a harmonic map to
$F_0$ by the composition formula.

The geodesic ball $B_R$ of radius $R<\frac{\pi}{2\sqrt{2}}$ is a
convex domain in $\mathcal{G}(n,n)$(cf.\cite{HJW1980}). For $F_0$ is
the totally geodesic fiber of $\mathcal{L}(n)$ and $\mathcal{L}(n)$
is totally geodesic in $\mathcal{G}(n,n)$, so is $F_0$ in $G(n,n)$.
Thus $F_0 \cap B_R$ is a convex domain in $F_0$ and $D \subset F_0
\cap B_R$.

Following the theorem \ref{Th:LiouvilleType}, the harmonic map
$\pi\circ \nu$ is constant. Without losing generality, assuming the
image is $O$. Then the image of $\nu$ is the cosets represented by
$e^{i\theta} Id$, i.e.

$$Hess(u)=(\tan \theta) Id$$

Then $\frac{\partial^2 u}{\partial x_i \partial x_j}=0$ when $i\neq
j$. thus $\frac{\partial u}{\partial x_i}$ is only function about
$x_i$, so does $\frac{\partial^2 u}{\partial x_i^2}$. But
$\frac{\partial^2 u}{\partial x_i^2}=\frac{\partial^2 u}{\partial
x_j^2}$ for any $x_i,x_j$. Both sides have to be constant, i.e.
$Hess(u)=cId$.

Therefore $u$ is an affine plane.

\end{proof}

The region $T_R$ is not a strict tube with constant diameter, for
when $\frac{\sum_i \theta_i}{n}$ approaches $\pm\frac{\pi}{2}$, the
diameter will tend to zero, but this makes no difference to our
proof.

\noindent $\mathbf{Remark}$. Proposition \ref{Prop:TubeConst} says
nothing but if the Gauss map lies in a sufficient small tabular
neighborhood of radius $\frac{\pi}{2\sqrt{2}}$ around the closed
geodesic, then it is plane. In \cite{JX1999}, after delicate study
of the structure about Grassmannian, Jost and Xin show there is a
bigger convex domain $B_G$. Hence above proposition still holds if
the image of $\nu \circ \pi$ lies in $F_0 \cap B_G$.

We are in the position to complete the proof of
Theorem~\ref{Th:GenerHJW}. Denote
$\triangle_u=(det(I+Hess^2(u)))^\frac{1}{2}$, then $\triangle_u\leq
cos^{-n}(\frac{R}{\sqrt{n}})$ implies $\Sigma$ is simple and $\sum
{\theta_k}^2\leq R^2$, moreover $\sum_k(\theta_k-\frac{\sum_i
\theta_i}{n})^2\leq R^2$. By Proposition ~\ref{Prop:TubeConst}, $u$
is affine plane. The special case $n=2$ follows form the fact the
fiber $SU(2)/SO(2)$ is isomorphic to $S^2$. The radius of $F_0$'s
convex domain is $\frac{\pi}{2}$ rather than
$\frac{\pi}{2\sqrt{2}}$.


\begin{thebibliography}{10}

\bibitem{Audin_M}
M.~Audin.
\newblock {\em Lagrangian submanifolds}.
\newblock lectures notes, available at
  http://irmasrv1.u-strasbg.fr/maudin/publications.html.

\bibitem{Chern_S_S1965}
S.S. Chern.
\newblock On the curvatures of a piece of hypersurface in euclidean space.
\newblock {\em Abh. Math. Sem. Univ. Hamburg}, 29:77--91, 1965.

\bibitem{DHJ2006}
Y.X. Dong, Y.B. Han, and Q.C. Ji.
\newblock Berntein type theorems for minimal lagrangian graphs of quaternion
  euclidean space.
\newblock arXiv: math.DG/0606779, 2006.

\bibitem{EH1990}
K.~Ecker and G.~Huisken.
\newblock A bernstein result for minimal graphs of controlled growth.
\newblock {\em J. Differential Geom.}, 31(2):397--400, 1990.

\bibitem{HL1982}
R.~Harvey and Lawson H.B.
\newblock Calibrated geometries.
\newblock {\em Acta math.}, 148:47--157, 1982.

\bibitem{HJW1980}
S.~Hildebrandt, J.~Jost, and K.-O. Widman.
\newblock Harmonic mappings and minimal submanifolds.
\newblock {\em Invent. math.}, 62:269--298, 1980.

\bibitem{JX1999}
J.~Jost and Y.~L. Xin.
\newblock Bernstein type theorems for higher codimension.
\newblock {\em Calc. Var. Partial Differential Equations}, 9(4):277--296, 1999.

\bibitem{JX2001}
J.~Jost and Y.~L. Xin.
\newblock A bernstein theorem for special lagrangian graphs.
\newblock arXiv:math.DG/0101131, January 2001.

\bibitem{Joyce_D_D2001}
D.~D. Joyce.
\newblock Lectures on calabi-yau and special lagrangian geometry.
\newblock arXiv:math.DG/0108088, 2001.

\bibitem{Oh_Y_G1990}
Y.~G. Oh.
\newblock Second variation and stabilities of minimal lagrangian submanifolds
  in k{\ae}hler manifolds.
\newblock {\em Invent. math.}, 101:501--519, 1990.

\bibitem{RU1998}
A.~Ros and F.~Urbano.
\newblock Lagrangian submanifolds of ${C}^n$ with conformal maslov form and the
  whitney sphere.
\newblock {\em J. Math. Soc.Japan}, 50:203--226, 1998.

\bibitem{Wang_M_T2003a}
M.~T. Wang.
\newblock On graphic berstein type results in higher codimension.
\newblock {\em Trans. Amer. Math. Soc.}, 355(1):265--271, 2003.

\bibitem{Wolfson_J1997}
J.~Wolfson.
\newblock Minimal lagrangian diffeomorphism and the monge-ampere equation.
\newblock {\em J. Differential Geometry}, 46:335--373, 1997.

\bibitem{Wong_Y_C1967}
Y.~C. Wong.
\newblock Differential geometry of grassmann manifolds.
\newblock {\em Proc. Nat. Acad. Sci. USA}, 57:589--594, 1967.

\bibitem{Yuan_Y2002}
Y.~Yuan.
\newblock A bernstein problem for special lagrangian equation.
\newblock {\em Invent. Math.}, 150(1):117--125, 2002.

\end{thebibliography}

\vfill

\noindent Wei Zhang

\

\noindent School of Mathematical Sciences

\noindent Fudan University

\noindent Shanghai, 200433, P. R.China

\

\noindent Email address: 032018009@fudan.edu.cn

\end{document}